\numberwithin{equation}{section}
\newtheorem{Theorem}{Theorem}[section]
\newtheorem{Lemma}[Theorem]{Lemma}
\newtheorem{Proposition}[Theorem]{Proposition}
\theoremstyle{definition}
\newtheorem{Remark}[Theorem]{Remark}
\newcommand{\thref}[1]{Theorem \ref{#1}}
\newcommand{\leref}[1]{Lemma \ref{#1}}
\newcommand{\prref}[1]{Proposition \ref{#1}}
\newcommand{\reref}[1]{Remark \ref{#1}}
\newcommand{\seref}[1]{Section \ref{#1}}
\newcommand{\pd}{{\partial}}
\newcommand{\Rset}{\mathbb{R}}
\newcommand{\Zset}{\mathbb{Z}}
\newcommand{\Nset}{\mathbb{N}}
\newcommand{\Id}{\mathrm{Id}}
\newcommand{\Span}{\mathrm{span}}
\newcommand{\Wr}{\mathrm{Wr}}
\newcommand{\al}{{\alpha}}
\newcommand{\be}{{\beta}}
\newcommand{\la}{{\lambda}}
\newcommand{\de}{{\delta}}
\newcommand{\De}{{\Delta}}
\newcommand{\na}{\nabla}
\newcommand{\taub}{\bar{\tau}}
\newcommand{\ep}{{\epsilon}}
\newcommand{\var}{\varepsilon}
\newcommand{\dms}{d\mu_{\mathrm{d}}(s)}
\newcommand{\fh}{\hat{f}}
\newcommand{\qh}{\hat{q}}
\newcommand{\pab}{p^{\al,\be}}
\newcommand{\qaba}{q^{\al,\be;a}}
\newcommand{\cM}{{\mathcal M}}
\newcommand{\cD}{{\mathcal D}}
\newcommand{\cPd}{{\mathcal P}^{d}}
\newcommand{\cH}{{\mathcal H}}
\newcommand{\cKd}{{\mathcal K}^{d}}
\newcommand{\cAd}{{\mathcal A}^{d}}
\newcommand{\cDaba}{{\mathcal D}^{\alpha,\beta;a}}
\newcommand{\cDh}{\hat{{\mathcal D}}}
\newcommand{\cA}{{\mathcal A}}
\newcommand{\cB}{{\mathcal B}}
\newcommand{\cBb}{\bar{{\mathcal B}}}
\newcommand{\fD}{{\mathfrak D}}
\newcommand{\fDb}{{\mathfrak D}_{\beta}}
\newcommand{\fDah}{{\hat{\mathfrak D}}_{\alpha}}
\newcommand{\cAaba}{{\mathcal A}^{\alpha,\beta;a}}
\newcommand{\rb}{\bar{r}}
\newcommand{\Bab}{B_{\al,\be}(z,\pd_z)}
\newcommand{\Babs}{B_{\al,\be,s}(z,\pd_z)}
\newcommand{\Lab}{L_{\al,\be}(n,E_n)}
\newcommand{\laab}{\la^{\al+\be}}
\newcommand{\cL}{{\mathcal L}}
\newcommand{\cQ}{{\mathcal Q}}
\newcommand{\cP}{{\mathcal P}}
\newcommand{\cLh}{{\hat{\mathcal L}}}
\newcommand{\Lh}{{\hat{L}}}
\begin{document}
\title[Krall-Jacobi algebras of partial differential operators]{Krall-Jacobi commutative algebras of partial differential operators}

\date{March 21, 2011}

\author[P.~Iliev]{Plamen~Iliev}
\address{School of Mathematics, Georgia Institute of Technology,
Atlanta, GA 30332--0160, USA}
\email{iliev@math.gatech.edu}
\thanks{The author was supported in part by NSF Grant \#0901092 and Max-Planck-Institut f\"ur Mathematik, Bonn.}

\begin{abstract}
We construct a large family of commutative algebras of partial differential operators invariant under rotations. These algebras are isomorphic extensions of the algebras of ordinary differential operators introduced by Gr\"unbaum and Yakimov corresponding to Darboux transformations at one end of the spectrum of the recurrence operator for the Jacobi polynomials. The construction is based on a new proof of their results which leads to a more detailed description of the one-dimensional theory. In particular, our approach establishes a conjecture by Haine concerning the explicit characterization of the Krall-Jacobi algebras of ordinary differential operators. 
\end{abstract}

\maketitle

\tableofcontents

\section{Introduction}\label{se1}

It is practically impossible to list the numerous applications of Jacobi polynomials which were introduced more than 150 years ago as solutions of the hypergeometric equation \cite{J}. In 1938, Krall \cite{Kr1} studied the general problem of classifying orthogonal polynomials which are eigenfunctions of a higher-order differential operator and solved it completely for operators of order 4, thus extending the classical orthogonal polynomials \cite{Kr2}. In the last century, many different solutions of Krall's problem were constructed, see for instance \cite{KK,Ko,L,Z} and the references therein.

More recently, new techniques have emerged in the literature inspired by the bispectral problem \cite{DG}.  The most general result concerning the Krall's problem appeared not long ago in the beautiful work of Gr\"unbaum and Yakimov \cite{GY}. They constructed a large family of solutions to a differential-difference bispectral problem, containing as special or limiting cases all previously known solutions of Krall's problem. Their approach is based on a very subtle application of the Darboux transformation \cite{D}, one of the basic tools in the theory of solitons \cite{MS}. The construction naturally leads to a commutative algebra of ordinary differential operators diagonalized by the generalized Jacobi polynomials, which we call the Krall-Jacobi algebra.

In the present paper we discuss an extension of the above theory within the context of partial differential operators invariant under rotations. First, we give a new proof of the results in \cite{GY}, which shows that if we iterate the Darboux transformation at one end of the spectrum of the Jacobi recurrence operator, then the Krall-Jacobi commutative algebra of differential operators is contained in an associative algebra with two generators which have natural multivariate extensions. This fact does not seem to follow easily even from the explicit formulas in \cite{KK} for the operator of minimal order in the Krall-Jacobi algebra in the case of a single Darboux transformation. Our proof is based on the approach used in \cite{I1,I2} to establish the bispectrality for rank-one commutative algebras of difference or  $q$-difference operators. The main difficulty here is to evaluate certain discrete integrals (or sums) involving the Jacobi polynomials, which were trivial integrals involving exponents in the rank-one case. As another corollary of our construction we establish Conjecture~3.2 on page 161 in \cite{H} for the Krall-Jacobi algebra of ordinary differential operators, which gives an explicit characterization of the isomorphic (dual) algebra of eigenfunctions. Moreover, our techniques yield an explicit eigenbasis of multivariate polynomials for the Krall-Jacobi algebras of partial differential operators in terms of the quantities used to describe the sequence of one-dimensional Darboux transformations and the spherical harmonics. 

The paper is organized as follows. In \seref{se2} we introduce the Jacobi polynomials and the corresponding recurrence and differential operators. In \seref{se3} we review briefly the sequence of Darboux transformations from the Jacobi recurrence operator. In \seref{se4} we present the new proof of the results in \cite{GY} together with the additional properties of the Krall-Jacobi algebra mentioned above. We give a detailed proof for the case needed for the multivariate extension (i.e. Darboux transformations at one end of the spectrum), but we indicate the necessary modifications for Darboux transformations at both ends in \reref{re4.6}. In \seref{se5} we define the Krall-Jacobi algebras of partial differential operators and we write an explicit basis in the space of polynomials in several variables. In the last section we illustrate the constructions in the paper with the simplest possible example which leads to a multivariate analog of Krall polynomials \cite{Kr2}.

\section{Jacobi polynomials and operators} \label{se2}

Throughout the paper we denote by $\pab_n(z)$ the Jacobi polynomials 
normalized as follows
\begin{equation}\label{2.1}
\pab_n(z)=(-1)^n\frac{(\al+\be+1)_n}{n!}F(-n,n+\al+\be+1,\be+1;z),
\end{equation}
where $F$ is the Gauss' ${}_2F_1$ hypergeometric function and 
$(a)_n$ is the the shifted factorial $(a)_0=1$, $(a)_n=a(a+1)\cdots(a+n-1)$ 
for $n>0$. Besides the orthogonality on $[0,1]$ with respect to the 
measure $(1-z)^{\al}z^{\be}dz$ the Jacobi polynomials are eigenfunctions 
of the second-order differential operator 
\begin{equation}\label{2.2}
\Bab=z(z-1)\pd_z^2+(z(\al+\be+2)-(\be+1))\pd_z
\end{equation}
with eigenvalue
\begin{equation}\label{2.3}
\laab_n=n(n+\al+\be+1),
\end{equation}
i.e. we have
\begin{equation}\label{2.4}
\Bab\pab_n(z)=\laab_n\pab_n(z).
\end{equation}
In order to simplify the notation, we shall write $\la_n$ instead of $\laab_n$ unless we need to use different values for the parameters and the explicit dependence on $\al+\be$ is important.

Since any family of orthogonal polynomials satisfies a three-term recurrence 
relation, the polynomials $\pab_n(z)$ are also eigenfunctions for a 
second-order difference operator acting on the discrete variable $n$. 
More precisely, if we denote by $E_n$ the customary shift operator acting on 
functions of a discrete variable $n$ by 
$$E_nf_n=f_{n+1},$$
then 
\begin{equation}\label{2.5}
\Lab\pab_n(z)=z\pab_n(z),
\end{equation}
where $\Lab$ is the second-order difference operator
\begin{equation}\label{2.6}
\Lab =A_nE_n+B_n\Id+C_nE_n^{-1},
\end{equation}
with coefficients
\begin{subequations}\label{2.7}
\begin{align}
&A_n=\frac{(n+1)(n+\be+1)}{(2n+\al+\be+1)(2n+\al+\be+2)}\label{2.7a}\\
&C_n=\frac{(n+\al)(n+\al+\be)}{(2n+\al+\be)(2n+\al+\be+1)}\label{2.7b}\\
&B_n=1-A_n-C_n.\label{2.7c}
\end{align}
\end{subequations}
Here and later we assume that $\pab_n(z)=0$ for $n<0$. Equivalently, if we 
think of $\pab_n(z)$ as the semi-infinite vector 
$$[\pab_0(z),\pab_1(z),\pab_2(z),\dots]^t,$$
then $\Lab$ can be represented by the tridiagonal semi-infinite (Jacobi) 
matrix
\begin{equation}\label{2.8}
\Lab=\left[\begin{matrix}
B_0&A_0\\
C_1&B_1&A_1\\
   &C_2&B_2&A_2\\
   &   &\ddots&\ddots&\ddots
\end{matrix}\right].
\end{equation} 
In view of equations \eqref{2.4} and \eqref{2.5} we can say that 
the polynomials $\pab_n(z)$ solve a {\em differential-difference 
bispectral problem}.

Using the explicit formula \eqref{2.1} one can easily check that the Jacobi 
polynomials satisfy the following differential-difference equation
\begin{equation}\label{2.9}
[2z\pd_z+(\al+\be)](\pab_{n}(z)-\pab_{n-1}(z))=(2n+\al+\be)(\pab_n(z)+\pab_{n-1}(z)).
\end{equation}
The above formula will be a key ingredient in the computation of 
``discrete integrals'' involving the Jacobi polynomials, which are analogous
to standard (continuous) integrals of products of polynomials and exponents. This is the 
first place where the particular normalization of $\pab_n(z)$ is very 
important.

\section{Discrete Darboux transformations}\label{se3}

In this section, following \cite{GY}, we describe the result of $k$ successive Darboux transformations starting
from $\Lab$ at $z=1$ for $\al\in\Nset$ and $k\leq\al$.

Recall that the lattice version of the Darboux transformation \cite{MS} of a difference operator (bi-infinite matrix) $\cL_0$ at $z$ amounts to performing an upper-lower factorization of $\cL_0-z\Id$ and to producing a new matrix by exchanging the factors. If we iterate this process $k$ times starting
from $\cL_0$ at $z=1$, we obtain a difference operator $\cLh$ as follows:
\begin{align}
&\cL_0=\Id+\cP_0\cQ_0\curvearrowright \cL_1=\Id+\cQ_0\cP_0=\Id+\cP_1\cQ_1
\curvearrowright\cdots                      \nonumber\\
&\quad \cL_{k-1}=\Id+\cQ_{k-2}\cP_{k-2}=\Id+\cP_{k-1}\cQ_{k-1}
                                \label{3.1}\\
&\qquad\curvearrowright \cLh=\cL_k=\Id+\cQ_{k-1}\cP_{k-1}.  \nonumber
\end{align}
From \eqref{3.1} it follows that 
\begin{equation}\label{3.2}
\cLh \cQ=\cQ\cL_0
\end{equation}
and 
\begin{equation}\label{3.3}
(\cL_0-\Id)^k=\cP\cQ, 
\end{equation}
where $\cP=\cP_0\cP_1\cdots \cP_{k-1}$ and $\cQ=\cQ_{k-1}\cQ_{k-2}\cdots \cQ_{0}$. The above 
formulas imply that 
\begin{equation}\label{3.4}
\ker(\cQ)\subset\ker((\cL_0-\Id)^k)\text{ and }\cL_0(\ker(\cQ))\subset\ker(\cQ).
\end{equation}
Conversely, one can show that if \eqref{3.4} holds then there exists a 
difference operator (bi-infinite matrix) $\cLh$ such that \eqref{3.2} holds and this operator 
is obtained by a sequence of Darboux transformations as in \eqref{3.1}.

Note that, up to a factor, the lower-triangular bi-infinite matrix $\cQ$ is uniquely 
determined by its kernel, i.e. if $\{\psi^{(j)}_n\}_{j=0}^{k-1}$ is a basis for 
$\ker(\cQ)$ then 
\begin{equation*}
\cQ f_n= g_n\Wr_n(\psi^{(0)}_n,\psi^{(1)}_n,\dots,\psi^{(k-1)}_n,f_n),
\end{equation*}
for an appropriate function (diagonal bi-infinite matrix) $g_n$. We use $\Wr_n$ to denote the discrete Wronskian (or Casorati determinant):
$$\Wr_n(h^{(1)}_n,h^{(2)}_n,\dots,h^{(k)}_n)=\det(h^{(i)}_{n-j+1})_{1\leq i,j\leq k}.$$
Combining the above remarks, we see that the sequence of Darboux 
transformations \eqref{3.1} for $\cL_0$ is characterized by choosing a 
basis for $\ker(\cQ)$ satisfying
\begin{equation}\label{3.5}
(\cL_0-\Id)\psi^{(0)}_n=0\text{ and }(\cL_0-\Id)\psi^{(j)}_n=\psi^{(j-1)}_n
\text{ for }j=1,\dots,k-1.
\end{equation}

We apply the above construction for the difference operator $\cL_0=\cL_{\al,\be}(n+\var,E_n)=A_{n+\var}E_n+B_{n+\var}\Id+C_{n+\var}E_n^{-1}$. Note that for a generic $\var$, the coefficients $A_{n+\var}$, $B_{n+\var}$ and $C_{n+\var}$ are defined for all $n\in\Zset$ and therefore $\cL_{\al,\be}(n+\var,E_n)$ is a well defined second-order difference operator (or a bi-infinite tridiagonal matrix).

For  $j\in\{0,1,\dots,k-1\}$ we set
\begin{subequations}\label{3.6}
\begin{align}
\phi^{1,j}_n
&=\frac{(-1)^j(n+1)_j(-n-\al-\be)_j}{j!(1-\al)_j},\label{3.6a}\\
\phi^{2,j}_n
&=\frac{(-1)^j(n+1)_{\al}(n+\be+1)_{\al}(-n)_j(n+\al+\be+1)_j}
{j!\al!(1+\al)_j(1+\be)_{\al}}.\label{3.6b}
\end{align}
\end{subequations}
One can show that the functions $\phi^{i,j}_{n+\var}$ are linearly independent 
(as functions of $n$) and 
\begin{align*}
&(\cL_{\al,\be}(n+\var,E_n)-\Id)\phi^{i,0}_{n+\var}=0,\text{ for }i=1,2\\
&(\cL_{\al,\be}(n+\var,E_n)-\Id)\phi^{i,j}_{n+\var}=\phi^{i,j-1}_{n+\var},
\text{ for }j=1,\dots,k-1, \quad i=1,2.
\end{align*}
Thus, we can write the functions $\psi^{(j)}_n$ as a linear combination of 
$\phi^{i,j}_{n+\var}$ as follows
\begin{equation*}
\psi^{(j)}_n=\sum_{l=0}^{j}(a_{j-l}\phi^{1,l}_{n+\var}+b_{j-l}\phi^{2,l}_{n+\var}).
\end{equation*}
If $b_0=0$ then $\psi^{(0)}_n=a_0\neq 0$, i.e. we can take 
$\psi^{(0)}_n=1$ and one can check that $\cL_1$ in \eqref{3.1} coincides 
(up to a conjugation by a diagonal matrix) with $\cL_{\al-1,\be}(n+\var,E_n)$. Therefore the operator 
$\cLh=\cL_k$ can be obtained by a sequence of $k-1$ Darboux transformations 
starting from $\cL_{\al-1,\be}(n+\var,E_n)$. Thus we can assume that 
$b_0\neq 0$, hence we can take $b_0=1$. Since $\cQ$ depends only on the space
$\Span\{\psi^{(0)}_n,\psi^{(1)}_n,\dots, \psi^{(k-1)}_n\}$, and not on the choice
of the specific basis, we can can take $b_j=0$ for $j>0$. Thus we shall 
consider a basis for $\ker(\cQ)$ of the form
\begin{equation*}
\psi^{(j)}_n=\sum_{l=0}^{j}a_{j-l}\phi^{1,l}_{n+\var}+\phi^{2,j}_{n+\var},
\end{equation*}
depending on $\al\in\Nset$, $\be$ and $k$ free parameters $a_0,a_1,\dots,a_{k-1}$. 

Now we consider the limit $\var\rightarrow 0$. Since $\lim_{\var\rightarrow 0}A_{-1+\var}=0$, it follows that the intertwining relation \eqref{3.2} holds 
for the semi-infinite parts of the bi-infinite matrices $\cL_{0}$, $\cLh$ and $\cQ$. In other words, we have
\begin{equation}\label{3.7}
\Lh(n,E_n) Q=Q \Lab,
\end{equation}
where  $\Lab$  is the semi-infinite matrix in \eqref{2.8}, $\Lh(n,E_n)$ is a similar semi-infinite Jacobi matrix, and $Q$ is a lower-triangular semi-infinite matrix, acting on functions (vectors) $f_n=[f_0,f_1,\dots]^t$ by 
\begin{equation}\label{3.8}
Q f_n= g_n\Wr_n(\psi^{(0)}_n,\psi^{(1)}_n,\dots,\psi^{(k-1)}_n,f_n),
\end{equation} 
where
\begin{equation}\label{3.9}
\psi^{(j)}_n=\sum_{l=0}^{j}a_{j-l}\phi^{1,l}_{n}+\phi^{2,j}_{n},
\end{equation}
with the convention that $f_j=0$ when $j<0$.

We shall normalize the matrix $Q$ by taking $g_n=1$ in \eqref{3.8}. Finally, we 
denote by $q_n(z)$ the polynomials defined by 
\begin{equation}\label{3.10}
q_n(z)=Q \pab_n(z)=
\Wr_n(\psi^{(0)}_n,\psi^{(1)}_n,\dots,\psi^{(k-1)}_n,\pab_n(z)),
\end{equation}
which depend on the free parameters $\al\in\Nset$, $\be$ and 
$a=(a_0,a_1,\dots,a_{k-1})$. 
\begin{Remark}\label{re3.1}
To simplify the notation we omit the explicit dependence of the parameters $\al$, $\be$ and $a$ in the functions $\psi^{(j)}_n$ and $q_n(z)$. When these parameters are needed, we shall write 
$\psi^{(j);\al,\be;a}_n(z)$ and $\qaba_n(z)$.
\end{Remark}
\begin{Remark}\label{re3.2}
From \eqref{2.5} and \eqref{3.7} it follows 
that 
\begin{equation}\label{3.11}
\Lh(n,E_n)q_n(z)=zq_n(z).
\end{equation}
It is easy to see that the off-diagonal entries of the tridiagonal matrix $\Lh(n,E_n)$ are 
nonzero and therefore, by Favard's theorem, there exists a unique (up to a
multiplicative constant) moment functional $\cM$ for which 
$\{q_n(z)\}_{n=0}^{\infty}$ is an orthogonal sequence, i.e.
\begin{equation*}
\cM(q_nq_m)=0, \text{ for }n\neq m \text{ and }
\cM(q_n^2) \neq 0.
\end{equation*}
More precisely, one can show that there exist constants 
$u_0,u_1,\dots,u_{k-1}$ 
such that the moment functional $\cM$ is given by the weight distribution 
\begin{equation*}
w(z)=
(1-z)^{\al-k}z^{\be}
+\sum_{j=0}^{k-1}u_j\de^{(j)}(z-1),
\end{equation*}
where $\de$ is the Dirac delta function. The parameters $u_j$ correspond to a 
different parametrization of the Darboux transformation \eqref{3.1}. The 
proof of this statement is well known and we omit the details. We refer 
the reader to \cite[Theorem 2, p.~287]{GHH} where a similar result was proved 
for extensions of Laguerre polynomials. 
\end{Remark}

\section{The commutative algebras $\cAaba$ and $\cDaba$}\label{se4}

In this section we prove that the polynomials $q_n(z)$ are eigenfunctions 
for all operators in a commutative algebra of differential operators.

\subsection{Statement of the main one-dimensional theorem}\label{ss4.1}
In order to formulate the precise statement let us denote
\begin{equation}\label{4.1}
\cD_{1}=z\pd_z \text{ and }\cD_{2}=z\pd_z^2+(\be+1)\pd_z,
\end{equation}
and let $\fDb$ denote the associative algebra generated by 
$\cD_{1}$ and $\cD_{2}$, i.e.
\begin{equation}\label{4.2}
\fDb=\Rset\langle \cD_{1},\cD_{2}\rangle.
\end{equation}
It is easy to see that 
\begin{equation}\label{4.3}
[\cD_{2},\cD_{1}]=\cD_{2}.
\end{equation}
Next we set
\begin{equation}\label{4.4}
\tau_n=\Wr_n(\psi^{(0)}_n,\psi^{(1)}_n,\dots,\psi^{(k-1)}_n)
\end{equation}
where the functions $\psi^{(j)}_n$ are given in \eqref{3.9}. Here we use the same convention as in \reref{re3.1} and we omit the explicit dependence of the parameters $\al$, $\be$ and $a$. Note 
that $\tau_n$ defined above and $\la_n$ defined in \eqref{2.3} are polynomials 
of $n$. In fact, one can show that up to a simple factor, $\tau_n$ 
belongs to $\Rset[\laab_{n-(k-1)/2}]=\Rset[\la_{n}^{\al+\be-k+1}]$, where 
$\la_n^{s}=n(n+s+1)$. One simple way to see this is to use the involution introduced 
in \cite{GY} which characterizes the 
subring $\Rset[\la_{n}^{s}]$ in $\Rset[n]$. For $s\in\Rset$ we 
define $I^{(s)}$ on $\Rset[n]$ by 
$$I^{(s)}(n)=-(n+s+1).$$
Then clearly $I^{(s)}(\la_n^s)=\la_n^s$ hence every polynomial of $\la_n^s$ is 
invariant under the action of $I^{(s)}$. Conversely, if $r\in\Rset[n]$
is invariant under $I^{(s)}$, then $r\in\Rset[\la_{n}^s]$.

Note also that the functions $\phi^{i,j}_n$ are invariant under 
$I^{(\al+\be)}$, 
and therefore $\psi^{(j)}_n$ in \eqref{3.9} are also invariant under 
$I^{(\al+\be)}$. From this it follows that $I^{(\al+\be-k+1)}$ will 
reverse the order of the rows in the determinant in equation \eqref{4.4}, leading to
$$I^{(\al+\be-k+1)}(\tau_{n})=(-1)^{k(k-1)/2}\tau_n.$$
The last formula shows that if $k\equiv 0,1\mod 4$, then 
$\tau_n\in\Rset[\la_{n-(k-1)/2}]$. Otherwise, $\tau_n$ is divisible by 
$(2n+\al+\be-k+2)=\la_{n-k/2+1}-\la_{n-k/2}$ and the quotient belongs 
to $\Rset[\la_{n-(k-1)/2}]$. Summarizing these observations we see that 
\begin{equation*}
\tau_n=\ep^{(k)}_n \taub(\la_{n-(k-1)/2}), 
\end{equation*}
where $\taub$ is a polynomial and
\begin{equation}\label{4.5}
\ep^{(k)}_n=\begin{cases}
1& \text{ if }k\equiv 0,1\mod 4\\
\la_{n-k/2+1}-\la_{n-k/2}& \text{ if }k\equiv 2,3\mod 4.
\end{cases}
\end{equation}

For $n,m\in\Zset$ and for a function $f_s$ defined on $\Zset$ it will 
be convenient to use the following notation
\begin{equation*}
\int_{m}^{n}f_s\dms=\begin{cases}
\sum_{s=m+1}^{n}f_s &\text{if }n>m\\
0 &\text{if }n=m\\
-\sum_{s=n+1}^{m}f_s&\text{if }n<m.
\end{cases}
\end{equation*}
Thus 
\begin{equation}\label{4.6}
\int_m^{n}f_s\dms=f_n+\int_m^{n-1}f_s\dms\text{ for all }m, n\in\Zset.
\end{equation}

We denote by $\cAaba$ the algebra of all polynomials $f$ such that 
$f(\la_{n-k/2})-f(\la_{n-k/2-1})$ is divisible by $\tau_{n-1}$ in 
$\Rset[n]$:
\begin{equation}\label{4.7}
\cAaba=\left\{f\in\Rset[t]:\frac{f(\la_{n-k/2})-f(\la_{n-k/2-1})}{\tau_{n-1}} \in\Rset[n]\right\}.
\end{equation}

\begin{Remark}\label{re4.1}
It is not hard to see that $\cAaba$ contains a polynomial of every degree 
greater than $\deg(\taub)$. Indeed, notice that for every $r\in\Rset[t]$ 
there exists $\bar{r}\in\Rset[t]$ such that
\begin{equation*}
\frac{r(\la_n)-r(\la_{n-1})}{\la_n-\la_{n-1}}=\bar{r}(\la_{n-1/2}).
\end{equation*}
Conversely, for every polynomial $\bar{r}$ there exists a polynomial $r$ 
such that the above equation holds. Moreover, up to an additive constant, 
$r$ is uniquely determined by 
$$r(\la_n)=\int_{0}^{n}(\la_s-\la_{s-1})\bar{r}(\la_{s-1/2})\dms
=\int_{-1}^{n-1}(\la_{s+1}-\la_{s})\bar{r}(\la_{s+1/2})\dms.$$ 
If 
$\bar{r}\neq 0$  then $\deg(r)=\deg(\bar{r})+1$. 
Thus, for every $g\in\Rset[t]$ and $c\in\Rset$ we can define $f\in\cAaba$ by
\begin{equation*}
f(\la_{n-k/2})=\int_{0}^{n}\ep^{(k+2)}_{s}g(\la_{s-(k+1)/2})
\tau_{s-1}\dms+c.
\end{equation*}
Conversely, to every $f\in\cAaba$ there correspond unique $g\in\Rset[t]$ and $c\in\Rset$, 
so that the above equation holds. 
\end{Remark}

The main result in this section is the following theorem.
\begin{Theorem}\label{th4.2}
For every $f\in\cAaba$ there exists $B_f=B_f(\cD_{1},\cD_{2})\in\fDb$ such that
\begin{equation}\label{4.8}
B_fq_n(z)=f(\la_{n-k/2})q_n(z).
\end{equation}
Thus, $\cDaba=\{B_f:f\in \cAaba\}$ is a commutative subalgebra of $\fDb$, isomorphic 
to $\cAaba$.
\end{Theorem}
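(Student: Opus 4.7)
The plan is to reduce the existence of $B_f\in\fDb$ to the evaluation of certain discrete integrals over the Jacobi polynomials, with the differential-difference identity \eqref{2.9} playing the role that the trivial relation $\partial_z^{-1}e^{nz}=n^{-1}e^{nz}$ plays in the rank-one arguments of \cite{I1,I2}. First I would expand $q_n(z)$ as a $(k+1)$-term combination $\sum_{j=0}^{k}c_{n,j}\pab_{n-j}(z)$, obtained by developing the Wronskian in \eqref{3.10} along its last column; the coefficients $c_{n,j}$ are the $(k\times k)$-minors of the matrix $(\psi^{(i)}_{n-l+1})$, with the top minor being $\tau_n$ from \eqref{4.4}.

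Next I would derive from \eqref{2.9} the one-step operator identity
\begin{equation*}
(\cD_{1}-n)\pab_n(z)=(\cD_{1}+n+\al+\be)\pab_{n-1}(z),
\end{equation*}
which shows that the generator $\cD_{1}$ of $\fDb$ can ``hop'' the Jacobi index down by one. Combined with the factorization $\Bab=\cD_{1}^2+(\al+\be+1)\cD_{1}-\cD_{2}$, which places $\Bab$ inside $\fDb$, this identity becomes the workhorse for translating index shifts in the Jacobi basis into elements of $\fDb$ modulo $n$-dependent scalar corrections.

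Given $f\in\cAaba$, the representation in \reref{re4.1} writes $f(\la_{n-k/2})$ as a discrete integral of $\ep^{(k+2)}_s g(\la_{s-(k+1)/2})\tau_{s-1}$ for some $g\in\Rset[t]$. The construction of $B_f$ would proceed as follows: apply a candidate $B_f$ to the expansion $q_n=\sum_j c_{n,j}\pab_{n-j}$, repeatedly use the one-step identity to convert differentiations into index shifts, and apply the telescoping identity \eqref{4.6} to recognize the resulting sum as the discrete integral representing $f(\la_{n-k/2})$. The divisibility condition defining $\cAaba$ is precisely what makes the integrand polynomial in the index, which in turn keeps the operator inside $\fDb$.

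The main obstacle is verifying this last point: defining a linear map on polynomials with the correct eigenvalues on $\{q_n\}$ is routine, but showing that the resulting coefficients in $z$ collapse to elements of $\fDb$ rather than to some larger ring of differential operators is the heart of the argument. This rests on the divisibility condition \eqref{4.7} together with the specific normalization of $\pab_n$ underlying \eqref{2.9}; the discrete integrals involving Jacobi polynomials that must be evaluated are the higher-rank analogue of the trivial exponential integrals in \cite{I1,I2} and are the source of the technical difficulty flagged in the introduction. Once $B_f\in\fDb$ is established, commutativity of $\cDaba$ and the isomorphism $\cDaba\cong\cAaba$ follow from the diagonality of each $B_f$ on $\{q_n\}$ with eigenvalues $f(\la_{n-k/2})$, which are distinct in $n$ for generic parameters.
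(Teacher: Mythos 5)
Your high-level plan matches the paper's strategy in spirit --- reduce \eqref{4.8} to the evaluation of discrete integrals of Jacobi polynomials, using \eqref{2.9}, the representation of $f$ from \reref{re4.1}, and the fact \eqref{4.19} that $\Bab\in\fDb$ --- and your one-step identity $(\cD_1-n)\pab_n(z)=(\cD_1+n+\al+\be)\pab_{n-1}(z)$ is a correct restatement of \eqref{2.9}. But the proposal stops exactly where the proof has to begin, and there are two concrete gaps. First, the mechanism that actually produces the eigenvalue equation is missing. Expanding $q_n$ along the last column of the Wronskian and ``applying a candidate $B_f$'' is not a construction: you never say what the candidate is, and telescoping via \eqref{4.6} on such an expansion does not by itself yield $f(\la_{n-k/2})q_n$. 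The paper runs the argument in the opposite direction, via a discrete analogue of Reach's lemma (\leref{le4.3}): it builds a function $F_n=\sum_{j=1}^{k+1}(-1)^{k+1+j}f^{(j)}_n\int^{n}f^{(0)}_s\Wr_s(f^{(1)}_s,\dots,\fh^{(j)}_s,\dots,f^{(k+1)}_s)\dms$ with $f^{(j)}_n=\psi^{(j-1)}_n$ and $f^{(k+1)}_n=\pab_n(z)$, whose Wronskian with $\psi^{(0)}_n,\dots,\psi^{(k-1)}_n$ equals $\bigl(\int^{n-1}f^{(0)}_s\tau_s\,\dms\bigr)q_n(z)=(f(\la_{n-k/2})+c)q_n(z)$; the problem then reduces to showing $F_n=\cB_f\,\pab_n(z)$ with $\cB_f\in\fDb$, since such an operator acts only on $z$ and passes through the Wronskian in $n$. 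Without this device (or an equivalent one) you have no identity to prove.

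Second --- and this is the point you yourself flag as ``the heart of the argument'' and then leave unresolved --- you never evaluate the discrete integrals. The paper's key computational input is \leref{le4.5}: for any $r\in\Rset[n]$ there is $\cBb\in\fDb$ with $\int_{-1}^{n}[r(s)\pab_s(z)-r(-s-\al-\be)\pab_{s-1}(z)]\dms=\cBb\pab_n(z)$. Its proof needs not only \eqref{2.9} (which handles the odd powers of $2n+\al+\be$, i.e.\ your one-step identity) but also the identities \eqref{4.18a}--\eqref{4.18b}, built from $\Bab$ and \eqref{2.4}, to handle the even powers; your one-step identity alone does not give these. Moreover, \leref{le4.5} applies only to integrands in which the coefficient of $\pab_{s-1}$ is obtained from that of $\pab_s$ by the involution $r(n)\mapsto r(-n-\al-\be)$; establishing that the integrands arising from $F_n$ actually pair up in exactly this form is itself a substantial step, requiring the re-indexing of \reref{re4.4} (iii)--(iv) and the invariance of the $\psi^{(j)}_n$ and of $\ep^{(k+2)}_n$ under the involution $I^{(\al+\be-1)}$. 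None of this appears in the proposal, so as written it is a plausible outline rather than a proof. (Your final step is fine: once \eqref{4.8} holds, commutativity and the isomorphism $\cDaba\cong\cAaba$ follow because $\{q_n\}$ is a basis of $\Rset[z]$ and $\la_{n-k/2}$ takes infinitely many distinct values.)
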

Equations \eqref{3.11} and \eqref{4.8} establish the bispectral properties of the generalized Jacobi polynomials $q_n(z)$. 
The fact that for every $f\in\cAaba$ there exists a differential operator $B_f$ satisfying \eqref{4.8} was conjectured in \cite{H}. The fact that $B_f\in\fDb$ is crucial for the multivariate extensions.

\subsection{Auxiliary facts}\label{ss4.2}
For the proof of the above theorem we shall need several lemmas. 
First we formulate a discrete analog of a lemma due to Reach \cite{R}. 
\begin{Lemma}\label{le4.3}
Let $f^{(0)}_{n}, f^{(1)}_{n},\dots,f^{(k+1)}_{n}$ be functions of a discrete 
variable $n$. Fix $n_1,n_2,\dots,n_{k+1}\in\Zset$ and let 
\begin{equation}\label{4.9}
F_n=\sum_{j=1}^{k+1}(-1)^{k+1+j}f^{(j)}_n\int_{n_j}^{n}f^{(0)}_{s}
\Wr_s(f^{(1)}_{s},\dots,\fh^{(j)}_{s},
\dots,f^{(k+1)}_{s})\dms,
\end{equation}
with the usual convention that the terms with hats are omitted. 
Then 
\begin{equation}\label{4.10}
\begin{split}
\Wr_n(f^{(1)}_{n},\dots,f^{(k)}_{n},F_n)=&
\int_{n_{k+1}}^{n-1}f^{(0)}_{s}\Wr_s(f^{(1)}_{s},\dots,f^{(k)}_{s})\dms\\
&\times \Wr_n(f^{(1)}_{n},\dots,f^{(k+1)}_{n}).
\end{split}
\end{equation}
\end{Lemma}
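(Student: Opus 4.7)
The plan is to identify the last row of the determinant on the left of \eqref{4.10} as a sum of two pieces: a column-independent linear combination of the functions $f^{(j)}_{n-l}$, plus a remainder that is nonzero only in the last column. Multilinearity of the Wronskian together with the vanishing of Wronskians with repeated rows will then reduce everything to bookkeeping.

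The first step is to apply the telescoping identity \eqref{4.6} to each integral appearing in $F_{n-l}$ and rewrite, for $l=0,1,\dots,k$,
$$F_{n-l}=\sum_{j=1}^{k+1}(-1)^{k+1+j}I_j(n)f^{(j)}_{n-l}-\sum_{s=n-l+1}^{n}f^{(0)}_s\sum_{j=1}^{k+1}(-1)^{k+1+j}f^{(j)}_{n-l}\Wr_s(f^{(1)}_s,\dots,\fh^{(j)}_s,\dots,f^{(k+1)}_s),$$
where $I_j(n):=\int_{n_j}^{n}f^{(0)}_s\Wr_s(f^{(1)}_s,\dots,\fh^{(j)}_s,\dots,f^{(k+1)}_s)\dms$. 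The inner alternating sum over $j$ is, up to the sign $(-1)^k$, a Laplace expansion along the first column of a $(k+1)\times(k+1)$ matrix whose first column is $(f^{(j)}_{n-l})_{j}$ and whose remaining $k$ columns are the shifts $s,s-1,\dots,s-k+1$ of the $f^{(j)}$. For $0\le l\le k-1$ and $n-l+1\le s\le n$ the inequalities $s-k+1\le n-l\le s$ force a repeated column, so this determinant vanishes. Only $l=k$ and $s=n$ yield a nonzero contribution; there a cyclic rearrangement moving the first column to the last produces $(-1)^k\Wr_n(f^{(1)}_n,\dots,f^{(k+1)}_n)$, which combines with the outer $(-1)^k$ to give exactly $+\Wr_n(f^{(1)}_n,\dots,f^{(k+1)}_n)$.

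Writing $\tilde F_{n-l}:=\sum_{j=1}^{k+1}(-1)^{k+1+j}I_j(n)f^{(j)}_{n-l}$, whose coefficients $I_j(n)$ do not depend on $l$, the previous step gives $F_{n-l}=\tilde F_{n-l}$ for $l<k$ and $F_{n-k}=\tilde F_{n-k}-f^{(0)}_n\Wr_n(f^{(1)}_n,\dots,f^{(k+1)}_n)$. Multilinearity in the last row of the Wronskian on the left of \eqref{4.10} then produces a $\tilde F$-contribution equal to $\sum_{j=1}^{k+1}(-1)^{k+1+j}I_j(n)\Wr_n(f^{(1)}_n,\dots,f^{(k)}_n,f^{(j)}_n)$, in which only the $j=k+1$ term survives (the others have a repeated row), together with a remainder contribution $-f^{(0)}_n\Wr_n(f^{(1)}_n,\dots,f^{(k+1)}_n)\Wr_n(f^{(1)}_n,\dots,f^{(k)}_n)$ obtained by cofactor expansion along the last row. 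Splitting $I_{k+1}(n)=\int_{n_{k+1}}^{n-1}f^{(0)}_s\Wr_s(f^{(1)}_s,\dots,f^{(k)}_s)\dms+f^{(0)}_n\Wr_n(f^{(1)}_n,\dots,f^{(k)}_n)$ via \eqref{4.6} and canceling the boundary terms yields \eqref{4.10}.

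The step most prone to error is the sign tracking at $l=k$, $s=n$: one must confirm that the $(-1)^k$ from the Laplace expansion exactly cancels the $(-1)^k$ from the cyclic rearrangement of columns, so that the auxiliary determinant contributes $+\Wr_n(f^{(1)}_n,\dots,f^{(k+1)}_n)$ rather than its negative. Once this sign is pinned down, everything else is routine multilinear algebra and cofactor expansion.
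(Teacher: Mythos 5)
Your proposal is correct and follows essentially the same route as the paper: the telescoping identity \eqref{4.6} combined with the vanishing of the bordered determinant (your column expansion is just the transpose of the paper's row expansion \eqref{4.11}) yields exactly the paper's equations \eqref{4.12}--\eqref{4.13}, after which multilinearity in the row occupied by $F$ and the splitting of $I_{k+1}(n)$ reproduce the cancellation the paper describes as ``column elimination.'' Your sign bookkeeping at $l=k$, $s=n$ is accurate, so nothing further is needed.
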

The above lemma was used in \cite{I1} to give an alternative proof of a theorem in \cite{HI} that rank-one commutative rings of difference operators with unicursal spectral curves are bispectral. Similar argument was used also in \cite{I2} for rank-one commutative rings of $q$-difference operators. Since the application 
in our case is very subtle and we need different elements of the proof, we 
briefly sketch it below. 
\begin{proof}[Proof of \leref{le4.3}]
Note that
\begin{equation*}
\left|\begin{matrix}
f^{(1)}_{n}&f^{(2)}_{n}&\dots &f^{(k+1)}_{n}\\
f^{(1)}_{n-1}&f^{(2)}_{n-1}&\dots &f^{(k+1)}_{n-1}\\
\vdots & \vdots& &\vdots\\
f^{(1)}_{n-k+1}&f^{(2)}_{n-k+1}&\dots &f^{(k+1)}_{n-k+1}\\
f^{(1)}_{n-l}&f^{(2)}_{n-l}&\dots &f^{(k+1)}_{n-l}
\end{matrix}\right|=0, \text{ for every }l=0,1,\dots,k-1.
\end{equation*}
Expanding the above determinant along the last row we obtain
\begin{equation}\label{4.11}
\sum_{j=1}^{k+1}(-1)^{k+1+j}f^{(j)}_{n-l}
\Wr_n(f^{(1)}_{n},\dots\fh^{(j)}_{n},\dots,f^{(k+1)}_{n})=0
\text{ for }l=0,\dots,k-1.
\end{equation}
Using \eqref{4.6} and \eqref{4.11} we see that for $l=0,\dots,k-1$ we have
\begin{equation}\label{4.12}
F_{n-l}=\sum_{j=1}^{k+1}(-1)^{k+1+j}f^{(j)}_{n-l}\int_{n_j}^{n}f^{(0)}_{s}
\Wr_s(f^{(1)}_{s},\dots,\fh^{(j)}_{s},
\dots,f^{(k+1)}_{s})\dms,
\end{equation}
and 
\begin{equation}\label{4.13}
\begin{split}
F_{n-k}=&\sum_{j=1}^{k+1}(-1)^{k+1+j}f^{(j)}_{n-k}\int_{n_j}^{n}f^{(0)}_{s}
\Wr_s(f^{(1)}_{s},\dots,\fh^{(j)}_{s},
\dots,f^{(k+1)}_{s})\dms \\
&\qquad-f^{(0)}_{n}\Wr_n(f^{(1)}_{n},\dots,f^{(k+1)}_{n}).
\end{split}
\end{equation}
If we plug \eqref{4.12} and \eqref{4.13} in 
$\Wr_n(f^{(1)}_{n},\dots,f^{(k)}_{n},F_n)$, then most of the terms cancel by 
column elimination and we obtain \eqref{4.10}.
\end{proof}

\begin{Remark}\label{re4.4}
We list below important corollaries from the proof of \leref{le4.3}.

\item[(i)] Note that the right-hand side of \eqref{4.10} does not depend 
on the integers $n_1,n_2\dots,n_k$. Moreover, if we change $n_{k+1}$ then 
only the value of 
$$\int_{n_{k+1}}^{n-1}f^{(0)}_{s}\Wr_s(f^{(1)}_{s},\dots,f^{(k)}_{s})\dms$$
will change by an additive constant, which is independent of $n$ and 
$f^{(k+1)}_{n}$. Thus, instead of \eqref{4.9} we can write 
\begin{equation*}
F_n=\sum_{j=1}^{k+1}(-1)^{k+1+j}f^{(j)}_n\int^{n}f^{(0)}_{s}
\Wr_s(f^{(1)}_{s},\dots,\fh^{(j)}_{s},
\dots,f^{(k+1)}_{s})\dms,
\end{equation*}
leaving the lower bounds of the integrals (sums) blank and we can fix them 
at the end appropriately. This would allow us to easily change the variable, 
without keeping track of the lower end.
\item[(ii)] From \eqref{4.11} it follows that for every $l=-1, 0,1\dots,k-1$ we 
can write $F_n$ also as
\begin{equation*}
F_n=\sum_{j=1}^{k+1}(-1)^{k+1+j}f^{(j)}_n\int^{n+l}f^{(0)}_{s}
\Wr_s(f^{(1)}_{s},\dots,\fh^{(j)}_{s},
\dots,f^{(k+1)}_{s})\dms,
\end{equation*}
and changing the variable in the discrete integral we obtain
\begin{equation*}
F_n=\sum_{j=1}^{k+1}(-1)^{k+1+j}f^{(j)}_n\int^{n}f^{(0)}_{s+l}
\Wr_s(f^{(1)}_{s+l},\dots,\fh^{(j)}_{s+l},
\dots,f^{(k+1)}_{s+l})\dms.
\end{equation*}
\item[(iii)] Suppose now that $k$ is even.
Using (ii) with $l=k/2-1$ we have
\begin{equation}\label{4.14}
F_n=\sum_{j=1}^{k+1}(-1)^{k+1+j}f^{(j)}_n\int^{n}f^{(0)}_{s+k/2-1}
\Wr_s(f^{(1)}_{s+k/2-1},\dots,\fh^{(j)}_{s+k/2-1},
\dots,f^{(k+1)}_{s+k/2-1})\dms.
\end{equation}
Let us consider the sum consisting of the first $k$ integrals:
\begin{equation*}
F_n^{(k)}=\sum_{j=1}^{k}(-1)^{k+1+j}f^{(j)}_n\int^{n}f^{(0)}_{s+k/2-1}
\Wr_s(f^{(1)}_{s+k/2-1},\dots,\fh^{(j)}_{s+k/2-1},
\dots,f^{(k+1)}_{s+k/2-1})\dms.
\end{equation*}
Expanding each Wronskian determinant along the last column we can write 
$F_n^{(k)}$ as a sum of $k$ terms $F_n^{(k,m)}$, each one involving as 
integrand one of the functions $f^{(k+1)}_{s+m}$, where 
$m=-k/2,-k/2+1,\dots,k/2-1$.
We can use \eqref{4.11} once again, this time for the functions 
$f^{(0)}_{n}, f^{(1)}_{n},\dots,f^{(k)}_{n}$ (i.e. omitting $f^{(k+1)}_{n}$),
to change $s$ as follows:\\
\begin{itemize}
\item If $m\geq 0$ we can replace $n$ with $n-m$ in the upper limit of the 
integral, or equivalently, if we keep the upper limit of the integral to be 
$n$, we can replace $s$ by $s-m$ in the integrand. Thus $F_n^{(k,m)}$ will 
have $f^{(k+1)}_{s}$ 
as integrand (and the integration goes up to $n$).
\item If $m\leq -1$ we can replace $s$ by $s-m-1$, thus $F_n^{(k,m)}$ will have 
$f^{(k+1)}_{s-1}$ as integrand (and the integration goes up to $n$).
\end{itemize}
This means that we can rewrite $F^{(k)}_{n}$ as sums of integrals, and the 
integrands can be combined in pairs (corresponding to $m$ and $(-m-1)$ 
for $m=0,1,\dots,k/2$) involving $f^{(k+1)}_{s}$ and 
$f^{(k+1)}_{s-1}$. Explicitly, we can write $F_n^{(k)}$, as a sum of terms 
which, up to a sign, have the form
\begin{equation}\label{4.15}
\begin{split}
&f^{(1)}_n\int^{n}\Bigg[f^{(0)}_{s-m+k/2-1}\left|\begin{matrix}
f^{(2)}_{s+k/2-m-1}&\dots &f^{(k)}_{s+k/2-m-1}\\
f^{(2)}_{s+k/2-m-2}&\dots &f^{(k)}_{s+k/2-m-2}\\
\vdots& &\vdots\\
\fh^{(2)}_{s}&\dots &\fh^{(k)}_{s}\\
\vdots& &\vdots\\
f^{(2)}_{s-k/2-m}&\dots &f^{(k)}_{s-k/2-m}
\end{matrix}\right|f^{(k+1)}_{s}\\
&\qquad\qquad\qquad-f^{(0)}_{s+m+k/2-1}\left|\begin{matrix}
f^{(2)}_{s+k/2+m-1}&\dots &f^{(k)}_{s+k/2+m-1}\\
f^{(2)}_{s+k/2+m-2}&\dots &f^{(k)}_{s+k/2+m-2}\\
\vdots& &\vdots\\
\fh^{(2)}_{s-1}&\dots &\fh^{(k)}_{s-1}\\
\vdots& &\vdots\\
f^{(2)}_{s-k/2+m}&\dots &f^{(k)}_{s-k/2+m}
\end{matrix}\right|f^{(k+1)}_{s-1}\Bigg]\dms,
\end{split}
\end{equation}
for $m=0,1,\dots,k/2$. For simplicity, we wrote explicitly only the 
terms with $f^{(1)}_n$ in front of the integral, but we have also similar 
expressions obtained by exchanging the roles of $f^{(1)}_n$ and $f^{(j)}_n$ 
for every $j=2,\dots,k$.

\item[(iv)] Finally if $k$ is odd, we shall use (ii) with $l=(k-1)/2$ and 
$l=(k-3)/2$ and write $F_n$ as the average of these two sums. Then we can 
apply the same procedure that we used in (iii) to write $F_n^{(k)}$ as sums of 
integrals, whose integrands can be combined in pairs involving 
$f^{(k+1)}_{s}$ and $f^{(k+1)}_{s-1}$.
\end{Remark}
The second important ingredient needed for the proof of \thref{th4.2} is the 
following lemma.

\begin{Lemma}\label{le4.5}
For $r(n)\in\Rset[n]$  there exists $\cBb\in\fDb$ such that
\begin{equation}\label{4.16}
\int_{-1}^{n}[r(s)\pab_{s}(z)
-r(-s-\al-\be)\pab_{s-1}(z)]\dms=\cBb\pab_n(z).
\end{equation}
\end{Lemma}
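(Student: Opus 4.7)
The plan is to exploit the symmetry $s \mapsto -s-\al-\be$ and reduce everything to the key identity \eqref{2.9}. Set $\mu_s = s(s+\al+\be)$, which is invariant under this involution while $2s+\al+\be$ reverses sign; since $(2s+\al+\be)^2 = 4\mu_s + (\al+\be)^2$, every $r \in \Rset[s]$ has a unique decomposition $r(s) = a(\mu_s) + (2s+\al+\be)\,b(\mu_s)$ with $a,b \in \Rset[t]$. Then $r(-s-\al-\be) = a(\mu_s) - (2s+\al+\be)b(\mu_s)$, so the integrand in \eqref{4.16} simplifies to
\[
r(s)\pab_s(z) - r(-s-\al-\be)\pab_{s-1}(z) = a(\mu_s)(\pab_s - \pab_{s-1}) + (2s+\al+\be)b(\mu_s)(\pab_s + \pab_{s-1}).
\]
Applying \eqref{2.9}, the second piece becomes $b(\mu_s)[2\cD_{1}+(\al+\be)](\pab_s - \pab_{s-1})$, and since $b(\mu_s)$ is a scalar in $z$ it commutes with any differential operator. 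Thus \eqref{4.16} reduces to establishing that, for every polynomial $c \in \Rset[t]$,
\[
\int_{-1}^{n} c(\mu_s)\bigl(\pab_s(z) - \pab_{s-1}(z)\bigr)\dms = c(\mathcal{E})\pab_n(z)
\]
for some fixed operator $\mathcal{E} \in \fDb$.

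The key point is to produce such an $\mathcal{E}$ with the property $\mathcal{E}(\pab_s - \pab_{s-1}) = \mu_s(\pab_s - \pab_{s-1})$ for every $s$. Since $\la_s = \mu_s + s$, the natural candidate is the minimal modification $\mathcal{E} = \Bab - \cD_{1}$ of the Jacobi operator. Using \eqref{2.4} for the first piece and rewriting $\cD_{1}(\pab_s - \pab_{s-1})$ by means of \eqref{2.9} gives $\cD_{1}(\pab_s - \pab_{s-1}) = s\,\pab_s + (s+\al+\be)\pab_{s-1}$, so the coefficients of $\pab_s$ and $\pab_{s-1}$ in $(\Bab - \cD_{1})(\pab_s - \pab_{s-1})$ collapse to $\la_s - s = \mu_s$ and $-\la_{s-1} - (s+\al+\be) = -\mu_s$, as required. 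Membership $\mathcal{E} \in \fDb$ follows from $z^2\pd_z^2 = \cD_{1}^2 - \cD_{1}$, which yields $\Bab - \cD_{1} = \cD_{1}^2 + (\al+\be)\cD_{1} - \cD_{2}$.

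With this eigenvalue-type identity in hand, $\mathcal{E}$ (a differential operator in $z$) commutes with the scalar $\mu_s$, so induction on $k$ gives $\mu_s^k(\pab_s - \pab_{s-1}) = \mathcal{E}^k(\pab_s - \pab_{s-1})$ and hence $c(\mu_s)(\pab_s - \pab_{s-1}) = c(\mathcal{E})(\pab_s - \pab_{s-1})$ for every polynomial $c$. Telescoping the sum and using $\pab_{-1}=0$ produces the displayed identity, and assembling the two contributions yields
\[
\cBb = a(\mathcal{E}) + [2\cD_{1} + (\al+\be)]\,b(\mathcal{E}) \in \fDb.
\]
The main obstacle is the middle step: recognizing that the symmetry decomposition naturally produces $\mu_s = s(s+\al+\be)$ (rather than the Jacobi eigenvalue $\la_s$) as the relevant scalar, and then spotting that the single shift $\Bab - \cD_{1}$ of the Jacobi operator realizes $\mu_s$ on the telescoping differences $\pab_s - \pab_{s-1}$; everything else is bookkeeping around \eqref{2.9}.
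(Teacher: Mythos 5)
Your proof is correct and takes essentially the same route as the paper's: both split $r$ into its even and odd parts under the involution $s\mapsto -s-\al-\be$, dispatch the odd part with \eqref{2.9}, and realize the invariant part by operators in $\fDb$ acting on the differences $\pab_s-\pab_{s-1}$ before telescoping. Your one refinement is to package the even case as the single eigenvalue identity $(\Bab-\cD_{1})(\pab_s-\pab_{s-1})=s(s+\al+\be)(\pab_s-\pab_{s-1})$, whereas the paper realizes the two symmetric functions $\la_s+\la_{s-1}$ and $\la_s\la_{s-1}$ separately in \eqref{4.18}; the two formulations are equivalent since $\la_s+\la_{s-1}=2s(s+\al+\be)-(\al+\be)$.
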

\begin{proof} 
Let us rewrite $r(n)$ as a polynomial of $2n+\al+\be$, i.e. we set 
$$r(n)=\rb(2n+\al+\be).$$
Then $r(-n-\al-\be)=\rb(-(2n+\al+\be))$. Thus, it is enough to show 
that for every $j\in\Nset_0$ there is $\cBb_j\in\fDb$ such that 
\begin{equation}\label{4.17}
(2n+\al+\be)^j\left(\pab_{n}(z)+(-1)^{j+1}\pab_{n-1}(z)\right)
=\cBb_j\left(\pab_{n}(z)-\pab_{n-1}(z)\right).
\end{equation}
Indeed, if \eqref{4.17} holds then 
\begin{equation*}
\begin{split}
&\int_{-1}^{n}(2s+\al+\be)^j\left(\pab_{s}(z)+(-1)^{j+1}\pab_{s-1}(z)\right)
\dms\\
&\qquad\qquad=\cBb_j\int_{-1}^{n}(\pab_{s}(z)-\pab_{s-1}(z))\dms\\
&\qquad\qquad=\cBb_j\pab_{n}(z),
\end{split}
\end{equation*}
giving the proof when $\rb(t)=t^j$, hence for arbitrary polynomials by 
linearity. When $j$ is odd, \eqref{4.17} follows immediately from \eqref{2.9}
and the definition of $\fDb$, see equations \eqref{4.1}-\eqref{4.2}. 
The case $j=0$ is obvious and therefore it remains to prove the statement 
when $j>0$ is even. Note that $(2n+\al+\be)=\la_{n}-\la_{n-1}$. Thus 
\eqref{4.17} for $j$ even will follow if we can show that there exist 
operators $\cBb',\cBb''\in\fDb$ such that
\begin{subequations}\label{4.18}
\begin{align}
(\la_{n}+\la_{n-1})\left(\pab_{n}(z)-\pab_{n-1}(z)\right)
&=\cBb'\left(\pab_{n}(z)-\pab_{n-1}(z)\right)\label{4.18a}\\
\intertext{and}
\la_{n}\la_{n-1}\left(\pab_{n}(z)-\pab_{n-1}(z)\right)
&=\cBb''\left(\pab_{n}(z)-\pab_{n-1}(z)\right).\label{4.18b}
\end{align}
\end{subequations}
Using \eqref{4.1} we see that the Jacobi operator $\Bab$ defined in 
\eqref{2.2} belongs to $\fDb$ since 
\begin{equation}\label{4.19}
\Bab=\cD_{1}^2+(\al+\be+1)\cD_{1}-\cD_{2}.
\end{equation}
From equations \eqref{2.4} and \eqref{2.9} we find
\begin{equation*}
\begin{split}
\la_{n-1}\pab_{n}(z)-\la_{n}\pab_{n-1}(z)=&
(\la_{n-1}-\la_{n})\left(\pab_{n}(z)+\pab_{n-1}(z)\right)\\
&\qquad\qquad+\la_{n}\pab_{n}(z)-\la_{n-1}\pab_{n-1}(z)\\
=&\cBb'''\left(\pab_{n}(z)-\pab_{n-1}(z)\right),
\end{split}
\end{equation*}
where $\cBb'''=\Bab-2\cD_{1}-(\al+\be)\in\fDb$. Using the above equation 
together with \eqref{2.4} we obtain
\begin{equation*}
\begin{split}
(\la_{n}+\la_{n-1})\left(\pab_{n}(z)-\pab_{n-1}(z)\right)=&
\la_{n}\pab_{n}(z)-\la_{n-1}\pab_{n-1}(z)\\
&\qquad\qquad+ \la_{n-1}\pab_{n}(z)-\la_{n}\pab_{n-1}(z)\\
=&\cBb'\left(\pab_{n}(z)-\pab_{n-1}(z)\right),
\end{split}
\end{equation*}
where $\cBb'=\Bab+\cBb'''\in\fDb$ proving \eqref{4.18a}. Similarly, 
\begin{equation*}
\begin{split}
\la_{n}\la_{n-1}\left(\pab_{n}(z)-\pab_{n-1}(z)\right)=&
\Bab\left(\la_{n-1}\pab_{n}(z)-\la_{n}\pab_{n-1}(z)\right)\\
=&\cBb''\left(\pab_{n}(z)-\pab_{n-1}(z)\right),
\end{split}
\end{equation*}
where $\cBb''=\Bab\cBb'''\in\fDb$, establishing \eqref{4.18b} and completing 
the proof.
\end{proof}

\subsection{Proof of the main one-dimensional theorem}\label{ss4.3}
We are now ready to give the proof of \thref{th4.2}. Let $f\in\cAaba$. From \reref{re4.1} we know that, up to an additive constant, we have 
\begin{equation*}
f(\la_{n-k/2})=\int_{-1}^{n-1}\ep^{(k+2)}_{s+1}g(\la_{s-(k-1)/2})\tau_{s}\dms.
\end{equation*}
We apply \leref{le4.3} and \reref{4.4} (i) with 
\begin{align*}
f^{(0)}_n&=\ep^{(k+2)}_{n+1}g(\la_{n-(k-1)/2})\\
f^{(j)}_n&=\psi^{(j-1)}_n\text{ for }j=1,2,\dots,k\\
f^{(k+1)}_n&=\pab_n(z).
\end{align*}
Using equations \eqref{3.10} and \eqref{4.4} we see that the right-hand side 
of \eqref{4.10} is equal to $(f(\la_{n-k/2})+c)q_n(z)$, where $c$ is a constant 
(independent of $n$ and $z$). The goal now is to show that if we 
choose appropriately the integers $n_j$ in \leref{le4.3}, then there exists 
a differential operator $\cB_f\in\fDb$ such that 
\begin{equation}\label{4.20}
F_n=\cB_f \,\pab_n(z).
\end{equation}
Suppose first that $k$ is even and let us write $F_n$ as explained in 
\reref{re4.4} (iii). From \reref{re4.1} it follows that the integral in the last 
term in the sum \eqref{4.14} representing $F_n$ is an element of $\Rset[\la_n]$.  
Therefore, using \eqref{2.4} and \eqref{4.19} we see that the last term in this sum is of the 
form $\cB \pab_n(z)$ for some operator $\cB\in\fDb$.

Thus we can consider the sum $F^{(k)}_{n}$ of the first $k$ terms. 
It is enough to show that each term of the form \eqref{4.15} can be 
represented as $\cB \pab_n(z)$ for some operator $\cB\in\fDb$. Recall that 
$f^{(j)}_n\in\Rset[\la_n]$ for $j=1,2,\dots, k$ and therefore 
it suffices to show that the integral is of the form $\cB \pab_n(z)$ for 
$\cB\in\fDb$ (since then we can commute $\cB$ and $f^{(j)}_n$ and use 
\eqref{2.4}). Now we apply \leref{le4.5}. To simplify the argument, we shall 
use the involution $I^{(\al+\be-1)}$ which acts on polynomials in $\Rset[n]$ by 
$I^{(\al+\be-1)}(n)=-(n+\al+\be)$. Thus for $r(n)\in\Rset[n]$ we have 
$I^{(\al+\be-1)}(r(n))=r(-(n+\al+\be))$. Note that for every $l\in\Rset$ we 
have 
$$I^{(\al+\be-1)}(\la_{n+l})=\la_{n-l-1}.$$
Therefore, if we denote by $\det{}'_n$ and $\det{}''_n$ the determinants in 
\eqref{4.15}, then $I^{(\al+\be-1)}$ will reverse the order of the rows, hence 
$$I^{(\al+\be-1)}(\det{}'_n)=(-1)^{(k-1)(k-2)/2}\det{}''_n.$$
Since 
$$f^{(0)}_{n\pm m+k/2-1}=\ep^{(k+2)}_{n\pm m+{k/2}}g(\la_{n\pm m-1/2}),$$
and 
$$I^{(\al+\be-1)}(g(\la_{n+ m-1/2}))= g(\la_{n- m-1/2}),$$
it remains to check that 
$$I^{(\al+\be-1)}(\ep^{(k+2)}_{n+m+{k/2}})
= (-1)^{(k-1)(k-2)/2}\ep^{(k+2)}_{n-m+{k/2}},$$
which follows at once from \eqref{4.5} by considering the two possible 
cases $k\equiv 0\mod 4$ and $k\equiv 2\mod 4$.

The case when $k$ is odd can be handled in a similar manner, using 
\reref{re4.4} (iv). \qed

\begin{Remark}\label{re4.6}
If $\beta\in \Nset$ then instead of the Darboux transformations \eqref{3.1} at $z=1$ we can consider a sequence of Darboux transformations at $z=0$. \thref{th4.2} can directly be applied in this case by replacing $z$ with $1-z$ and exchanging the roles of $\alpha$ and $\beta$. In particular, the corresponding commutative algebra $\cDaba$ constructed in \thref{th4.2} will be a subalgebra of $\fDah=\Rset\langle \cDh_1,\cDh_2\rangle$, where 
$$\cDh_1=(z-1)\pd_z \text{ and }\cDh_2=(1-z)\pd_z^2-(\alpha+1)\pd_z.$$
Finally, if both $\alpha$ and $\beta$ are positive integers then we can iterate the Darboux transformation at $z=1$ and $z=0$ at most $\alpha$ and $\beta$ times, respectively. In this case, we need also the functions
\begin{subequations}\label{4.21}
\begin{align}
\varphi^{1,j}_n
&=\frac{(-1)^n\be!(n+1)_{\al}(n+1)_j(-n-\al-\be)_{j}}{j!\al!(1-\be)_j(n+1)_{\be}},\label{4.21a}\\
\varphi^{2,j}_n
&=\frac{(-1)^n(n+1)_{\al+\be}(-n)_j(n+\al+\be+1)_j}
{j!(\al+\be)!(1+\be)_{j}},\label{4.21b}
\end{align}
\end{subequations}
which are linearly independent  and satisfy
\begin{align*}
&\cL_{\al,\be}(n+\var,E_n)\varphi^{i,0}_{n+\var}=0,\text{ for }i=1,2\\
&\cL_{\al,\be}(n+\var,E_n)\varphi^{i,j}_{n+\var}=\varphi^{i,j-1}_{n+\var},
\text{ for }i=1,2 \text{ and }1 \leq j<\be.
\end{align*}
Note that if we define 
$$\xi_n=(-1)^{n}\frac{(n+1)_{\be}}{(n+1)_{\al}},$$
then $\xi_n\varphi^{i,j}_n$ become polynomials of $\la_{n}$. If we apply $k$ Darboux steps at $1$ and 
$l$ Darboux steps at $0$, then instead of \eqref{4.4} we define $\tau_n$ by
\begin{equation*}
\tau_n=\left(\xi_n(n+\al-k-l+2)_{k+l-1}\right)^l\Wr_n(\psi^{(0)}_n,\psi^{(1)}_n,\dots,\psi^{(k-1)}_n,\hat{\psi}^{(0)}_n,\hat{\psi}^{(1)}_n,\dots,\hat{\psi}^{(l-1)}_n),
\end{equation*}
where $\hat{\psi}^{(j)}_n$ are defined similarly to $\psi^{(j)}_n$, using the functions $\varphi^{i,j}_n$. Then \thref{th4.2} holds with $\fDb$ replaced by $\fD=\Rset\langle z\pd_z,\pd_z\rangle$, since we need to use now operators from both algebras $\fDb$ and $\fDah$. The proof follows along the same lines, using \leref{le4.5} together with the analogous statement concerning the transformations at $0$, i.e. for $r(n)\in\Rset[n]$  there exists $\cBb\in\fDah$ such that
\begin{equation*}
\frac{1}{\xi_n}\int_{-1}^{n}[r(s)\xi_{s}\pab_{s}(z)
-r(-s-\al-\be)\xi_{s-1}\pab_{s-1}(z)]\dms=\cBb\pab_n(z).
\end{equation*}
\end{Remark}

Note that the construction of the operator $B_f(\cD_1,\cD_2)$ in the proof of \thref{th4.2} depends only on equations \eqref{2.4}, \eqref{2.9} and \eqref{4.19}. For $s\in\Nset_0$ let us denote
\begin{equation}\label{4.22}
\cD_{2,s}=z\pd_z^2+(\be+1)\pd_z-\frac{s(s+2\beta)}{4z}=\cD_2-\frac{s(s+2\beta)}{4z},
\end{equation}
and let us define $\Babs$ similarly to $\Bab$ using \eqref{4.19} with $\cD_2$ replaced by $\cD_{2,s}$:
\begin{equation}\label{4.23}
\Babs=\cD_1^2+(\al+\be+1)\cD_1-\cD_{2,s}.
\end{equation}
Then it is easy to check that 
$$z^{-s/2}\Babs\cdot z^{s/2}=B_{\al,\be+s}(z,\pd_z)+\la^{\al+\be}_{n+s/2}-\la^{\al+\be+s}_{n}.$$
From this relation and \eqref{2.4} it follows immediately that
\begin{equation}\label{4.24}
\Babs \left[p^{\al,\be+s}_n(z)z^{s/2}\right]=\la^{\al+\be}_{n+s/2}p^{\al,\be+s}_n(z)z^{s/2}.
\end{equation}
It is also easy to see that 
\begin{equation}\label{4.25}
\begin{split}
&(2z\pd_z+\al+\be)(p^{\al,\be+s}_{n}(z)z^{s/2}+p^{\al,\be+s}_{n-1}(z)z^{s/2})\\
&\qquad =(2(n+s/2)+\al+\be)(p^{\al,\be+s}_{n}(z)z^{s/2}-p^{\al,\be+s}_{n-1}(z)z^{s/2}).
\end{split}
\end{equation}
Note also that 
$$[\cD_{2,s},\cD_1]=\cD_{2,s},$$
which shows that for every $f\in\cAaba$ we have a well-defined operator $B_f(\cD_1,\cD_{2,s})$.
Comparing equations  \eqref{2.4}, \eqref{2.9} and \eqref{4.19} with \eqref{4.24}, \eqref{4.25} and \eqref{4.23} 
we obtain the following corollary of \thref{th4.2}.
\begin{Proposition}\label{pr4.7}
If we define for $s\in\Nset_0$
\begin{equation}\label{4.26}
\qh^{\al,\be;a}_{n,s}(z)=\Wr_n(\psi^{(0);\al,\be;a}_{n+s/2},\psi^{(1);\al,\be;a}_{n+s/2},\dots,\psi^{(k-1);\al,\be;a}_{n+s/2},p^{\al,\be+s}_n(z)),
\end{equation}
then for every $f\in\cAaba$ we have
\begin{equation}\label{4.27}
B_f(\cD_1,\cD_{2,s})\left[\qh^{\al,\be;a}_{n,s}(z)z^{s/2}\right]=f(\la^{\al+\be}_{n+(s-k)/2})\qh^{\al,\be;a}_{n,s}(z)z^{s/2},
\end{equation}
where $B_f$ are the operators constructed in \thref{th4.2}.
\end{Proposition}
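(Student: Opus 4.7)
My plan is to transport the proof of \thref{th4.2} to this shifted setting by exploiting the formal parallelism between the triples of identities \eqref{2.4}, \eqref{2.9}, \eqref{4.19} and \eqref{4.24}, \eqref{4.25}, \eqref{4.23}, which is already highlighted in the paragraph preceding the proposition. As emphasized there, the construction of $B_f(\cD_1,\cD_2)$ in \ssref{ss4.3} uses nothing about $\pab_n(z)$ beyond these three identities together with the purely formal Wronskian manipulation of \leref{le4.3}. Under the substitutions $\pab_n(z)\leadsto p^{\al,\be+s}_n(z)z^{s/2}$, $\laab_n\leadsto \laab_{n+s/2}$, $\cD_2\leadsto\cD_{2,s}$, $\Bab\leadsto\Babs$, every step of that construction remains valid; the commutation relation $[\cD_{2,s},\cD_1]=\cD_{2,s}$ guarantees that $B_f(\cD_1,\cD_{2,s})$ is well-defined as an element of $\Rset\langle\cD_1,\cD_{2,s}\rangle$ for every $f\in\cAaba$.

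The first substantive step is to re-prove \leref{le4.5} in the shifted setting: for every $r(n)\in\Rset[n]$ there exists $\cBb_s\in\Rset\langle\cD_1,\cD_{2,s}\rangle$ such that
\begin{equation*}
\int_{-1}^{n}\bigl[r(\sigma)\,p^{\al,\be+s}_{\sigma}(z)z^{s/2}-r(-\sigma-\al-\be-s)\,p^{\al,\be+s}_{\sigma-1}(z)z^{s/2}\bigr]\,d\mu_{\mathrm{d}}(\sigma)=\cBb_s\bigl[p^{\al,\be+s}_n(z)z^{s/2}\bigr].
\end{equation*}
The derivation copies \leref{le4.5} line by line: reduce to $r(n)=(2n+\al+\be+s)^j$ (the natural shifted parameter from \eqref{4.25}), dispose of odd $j$ directly from \eqref{4.25}, and for even $j$ use \eqref{4.24} and \eqref{4.23} to realize multiplication by $\laab_{n+s/2}+\laab_{n+s/2-1}$ and $\laab_{n+s/2}\laab_{n+s/2-1}$ as operators in $\Rset\langle\cD_1,\cD_{2,s}\rangle$ acting on $p^{\al,\be+s}_n(z)z^{s/2}$, exactly as in the original derivation of \eqref{4.18a}--\eqref{4.18b}.

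Next I would invoke \leref{le4.3} and \reref{re4.4}(i) with
\begin{equation*}
f^{(0)}_n=\ep^{(k+2)}_{n+s/2+1}\,g(\laab_{n+(s-k+1)/2}),\quad f^{(j)}_n=\psi^{(j-1);\al,\be;a}_{n+s/2}\ \text{ for } j=1,\dots,k,\quad f^{(k+1)}_n=p^{\al,\be+s}_n(z)z^{s/2},
\end{equation*}
where $g$ is the polynomial associated to $f$ via \reref{re4.1}. Two observations match the Wronskian structure of \eqref{4.26}: first, $\Wr_n(f^{(1)}_n,\dots,f^{(k)}_n)=\tau_{n+s/2}$, obtained from the determinant defining $\tau_m$ by evaluating the underlying polynomial identity at $m=n+s/2$; second, applying \reref{re4.1} at the shifted index gives $\int_{-1}^{n-1}f^{(0)}_\sigma\tau_{\sigma+s/2}\,d\mu_{\mathrm{d}}(\sigma)=f(\laab_{n+(s-k)/2})+c$ for a constant $c$. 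By \eqref{4.10} and \eqref{4.26}, the right-hand side of \eqref{4.10} is then $\bigl(f(\laab_{n+(s-k)/2})+c\bigr)\qh^{\al,\be;a}_{n,s}(z)$, which, after multiplication by $z^{s/2}$, yields the right-hand side of \eqref{4.27}. For the left-hand side I would carry out the pairing argument of \reref{re4.4}(iii)--(iv) using the involution $I^{(\al+\be+s-1)}$ (acting as $n\mapsto-n-\al-\be-s$, the natural reflection fixing $\laab_{n+s/2-1/2}$) to combine the integrands of $f^{(k+1)}_\sigma$ and $f^{(k+1)}_{\sigma-1}$, and then apply the shifted \leref{le4.5} to express each paired integral as an operator in $\Rset\langle\cD_1,\cD_{2,s}\rangle$ acting on $p^{\al,\be+s}_n(z)z^{s/2}$. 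Since each $f^{(j)}_n$ is a polynomial in $\laab_{n+s/2}$ modulo the $\ep^{(k)}$ corrections, it commutes past this operator via \eqref{4.24}, producing $B_f(\cD_1,\cD_{2,s})\bigl[p^{\al,\be+s}_n(z)z^{s/2}\bigr]$ and hence \eqref{4.27}.

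The main bookkeeping obstacle is tracking how the half-integer shift $n\mapsto n+s/2$ propagates through the involution argument and the $\ep^{(k)}$ sign corrections of \eqref{4.5}. However, the shift is additive and leaves the parity of $k$ unchanged, so the four-case analysis from the proof of \thref{th4.2} carries over verbatim; no new identities are required and the conclusion follows by exactly the argument of \ssref{ss4.3}.
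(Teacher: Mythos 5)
Your proposal is correct and takes essentially the same route as the paper: the paper's entire proof of \prref{pr4.7} is the observation that the construction of $B_f$ in \thref{th4.2} uses only equations \eqref{2.4}, \eqref{2.9} and \eqref{4.19}, whose exact analogues are \eqref{4.24}, \eqref{4.25} and \eqref{4.23}, so the argument transports verbatim under the substitutions you list. You have simply made explicit the bookkeeping (the shifted \leref{le4.5}, the involution $I^{(\al+\be+s-1)}$, and the identification of the Wronskian with $\tau_{n+s/2}$) that the paper leaves implicit, and all of it checks out.
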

We have displayed all parameters in \eqref{4.26} to underline the fact that in the functions 
$\psi^{(j)}_n$ the parameters stay the same, only the variable $n$ is shifted by $s/2$, while in $\pab_n(z)$ we change only the parameter $\be$.

\section{Krall-Jacobi algebras in higher dimension}\label{se5}
In this section we show that the commutative algebra $\cDaba$ of ordinary differential operators constructed in \thref{th4.2} is isomorphic to a commutative algebra of partial differential operators invariant under rotations which can be diagonalized in the space of polynomials in $d$ variables.

\subsection{Notations}\label{ss5.1} 
Let $x=(x_1,x_2,\dots,x_d)\in\Rset^d$, and let $\cPd=\Rset[x]=\Rset[x_1,x_2,\dots,x_d]$ be the corresponding ring of polynomials in the variables $x_1,x_2,\dots,x_d$.  We denote by $B^d$ and $S^{d-1}$ the unit ball and the unit sphere in $\Rset^{d}$:
$$B^{d}=\{x\in\Rset^d:||x||\leq 1\}, \qquad S^{d-1}=\{x\in\Rset^d:||x||= 1\}.$$
In polar coordinates we shall write $x=\rho x'$ where $\rho=||x||$ and $x'\in S^{d-1}$.

We denote by $\De_x=\sum_{j=1}^{d}\pd_{x_{j}}^2$ the Laplace operator and by $\cH_l$ the space of homogeneous harmonic polynomials of degree $l$, i.e. the homogeneous polynomials $Y(x)$ of degree $l$, satisfying the equation $\De_x Y(x)=0$. It is well known that the dimension  $\sigma_l=\dim \cH_l$ is given by 
$$\sigma_l=\binom{l+d-1}{d-1}-\binom{l+d-3}{d-1}.$$
The restrictions of $Y\in H_l$ on $S^{d-1}$ are the spherical harmonics. Let $\omega$ denote the Lebesgue measure on $S^{d-1}$ and let $\omega_d:=\omega(S^{d-1})=2\pi^{d/2}/\Gamma(d/2)$.
Throughout this section, we use $\{Y^l_j(x):1\leq j\leq \sigma_l\}$ to denote an orthonormal basis for $\cH_l$ on $S^{d-1}$. 
Thus, we have
\begin{equation}\label{5.1}
\frac{1}{\omega_d}\int_{S^{d-1}}Y^{l_1}_{j_1}(x')Y^{l_2}_{j_2}(x')d\omega(x')=\delta_{l_1,l_2}\delta_{j_1,j_2}.
\end{equation}
Recall that in polar coordinates we have
\begin{equation}\label{5.2}
\De_x=\pd_{\rho}^2+\frac{d-1}{\rho}\pd_{\rho}+\frac{1}{\rho^2}\De_{S^{d-1}},
\end{equation}
where $\De_{S^{d-1}}$ is the Laplace-Beltrami operator on the sphere $S^{d-1}$. Since 
\begin{equation}\label{5.3}
Y^{l}_{j}(x)=\rho^{l}Y^{l}_{j}(x'),
\end{equation}
the polynomials $Y^{l}_j(x)$ satisfy the equation
\begin{equation}\label{5.4}
\De_{S^{d-1}}Y^{l}_{j}(x)=-l(l+d-2)Y^{l}_{j}(x).
\end{equation}
\subsection{Construction of the algebra of partial differential operators}\label{ss5.2} 
To motivate the construction, notice that if we set $\be=\frac{d}{2}-1$ and if we replace $z\in[0,1]$ by $\rho\in[0,1]$, where $z=\rho^2$ then for the Jacobi measure on $[0,1]$ considered in \seref{se2} we obtain
\begin{equation}\label{5.5}
(1-z)^{\alpha}z^{\beta}dz=2(1-\rho^2)^{\alpha}\rho^{d-1}d\rho.
\end{equation}
Recall that the Jacobi polynomials on $B^{d}$ are defined as orthogonal polynomials on $B^{d}$ with respect to measure $d \mu_{\al}(x)=(1-||x||^2)^{\alpha}dx$, see for instance \cite[page~38]{DX}. If we use polar coordinates $x=\rho x'$ then, up to a scaling factor, $d \mu_{\al}(x)$ is a product of the measure in \eqref{5.5} on $[0,1]$ and the surface measure $d\omega (x')$ on $S^{d-1}$. Note also that the operators $\cD_1$ and $\cD_2$ defined in the beginning of \seref{se4} change as follows:
\begin{equation}\label{5.6}
\begin{split}
&\cD_{1}=z\pd_z=\frac{1}{2}\rho\pd_{\rho}\\
&\cD_{2}=z\pd_z^2+(\be+1)\pd_z= \frac{1}{4}\left[\pd_{\rho}^2+\frac{d-1}{\rho}\pd_{\rho}\right].
\end{split}
\end{equation}
Moreover, in polar coordinates we have 
$$\rho\pd_{\rho}=\sum_{j=1}^{d}x_j\pd_{x_j}=x \cdot \na_x,$$
where $\na_x$ is the gradient, while the operator $\pd_{\rho}^2+\frac{d-1}{\rho}\pd_{\rho}$ is the radial part of the Laplace operator $\De_x$.
It is easy to see that the operators $ \frac{1}{2}x \cdot \na_x$ and $\frac{1}{4} \De_x$ satisfy the commutativity relation
\begin{equation*}
\left[\frac{1}{4} \De_x, \frac{1}{2}x \cdot \na_x \right]=\frac{1}{4} \De_x,
\end{equation*}
which combined with \eqref{4.3} shows that there is a natural isomorphism between the algebra $\fDb$ given in \eqref{4.2} and the associative algebra generated by 
$\frac{1}{2}x \cdot \na_x$ and $\frac{1}{4} \De_x$ defined by
\begin{subequations}\label{5.7}
\begin{align}
&\cD_{1}=z\pd_z\rightarrow \frac{1}{2}x \cdot \na_x \label{5.7a}\\
&\cD_{2}=z\pd_z^2+(\be+1)\pd_z\rightarrow \frac{1}{4} \De_x.\label{5.7b}
\end{align}
\end{subequations}
Thus if we set 
\begin{equation*}
\cAd(\al;a)=\mathcal{A}^{\al,d/2-1;a}
\end{equation*}
we see that the commutative algebra $\mathcal{D}^{\al,d/2-1;a}$ defined in \thref{th4.2} is isomorphic to a commutative algebra of partial differential operators:
\begin{equation}\label{5.8}
\cKd(\al;a)=\left\{B_{f}\left(\frac{1}{2}x \cdot \na_x\, ,\,\frac{1}{4}\De_x\right):f\in\cAd(\al;a)\right\}.
\end{equation}
The main point now is that we can write a basis for $\cPd$ which diagonalizes the operators in $\cKd(\al;a)$, using the polynomials $\qh$ defined in \prref{pr4.7} and the spherical harmonics.

\begin{Theorem}\label{th5.1}
For $n\in\Nset_0$, $i\in\Nset_0$, such that $i\leq \frac{n}{2}$ and $j\in\{1,2,\dots,\sigma_{n-2i}\}$ define
\begin{equation}\label{5.9}
Q_{n,i,j}(x)=\qh^{\al,d/2-1;a}_{i,n-2i}(||x||^2)\,Y^{n-2i}_{j}(x).
\end{equation}
Then the polynomials $\{Q_{n,i,j}(x)\}$ form a basis for $\cPd$ and for every $f\in\cAd(\al;a)$ we have
\begin{equation}\label{5.10}
B_{f}\left(\frac{1}{2}x \cdot \na_x\,,\,\frac{1}{4}\De_x\right)Q_{n,i,j}(x)=f(\la^{\al+d/2-1}_{(n-k)/2})Q_{n,i,j}(x).
\end{equation}
\end{Theorem}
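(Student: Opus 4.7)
The plan is to prove the basis statement and the eigenvalue identity \eqref{5.10} separately. For the basis, I would start from the classical decomposition $\cPd_n=\bigoplus_{i\leq n/2}||x||^{2i}\cH_{n-2i}$ of homogeneous polynomials of degree $n$, so that $\{||x||^{2i}Y^{n-2i}_j(x)\}$ is a basis of $\cPd_n$. Expanding the Wronskian in \eqref{4.26} along its last row (the row of Jacobi polynomials) shows that the coefficient of $z^n$ in $\qh^{\al,\be;a}_{n,s}(z)$ is a shifted value of $\tau_n$ times the (nonzero) leading coefficient of $p^{\al,\be+s}_n$, which is nonzero under the generic parameter assumptions already in force. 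Consequently the top-degree homogeneous part of $Q_{n,i,j}(x)$ is a nonzero scalar multiple of $||x||^{2i}Y^{n-2i}_j(x)$; a standard graded-linear-algebra argument then promotes these top pieces into a basis of all of $\cPd$.

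For \eqref{5.10} I would reduce to \prref{pr4.7} through the isomorphism \eqref{5.7}, using the observation that $\tfrac14\De_x$, when acting on a function of the form $p(||x||^2)Y^s_j(x)$, effectively becomes the modified operator $\cD_{2,s}$ of \eqref{4.22} on the radial factor. The precise intertwining is
\[
\tfrac14\De_x\bigl[p(||x||^2)\,Y^s_j(x)\bigr]=Y^s_j(x')\cdot\bigl[\cD_{2,s}(p(z)\,z^{s/2})\bigr]\Big|_{z=||x||^2},
\]
valid for every polynomial $p(z)$. To check it, write $Y^s_j(x)=\rho^s Y^s_j(x')$, split $\De_x$ via \eqref{5.2}, and apply \eqref{5.4}: the angular contribution $-s(s+d-2)/\rho^2$ exactly matches the $-s(s+2\be)/(4z)$ term of \eqref{4.22} after the substitutions $z=\rho^2$ and $\be=d/2-1$, while the remaining radial derivatives agree by a direct calculation. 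The analogous identity $\tfrac12 x\cdot\na_x[p(||x||^2)Y^s_j(x)]=Y^s_j(x')\cdot[\cD_1(p(z)z^{s/2})]|_{z=||x||^2}$ is immediate. Since both $\cD_1$ and $\cD_{2,s}$ preserve the form $\tilde p(z)z^{s/2}$ of the radial factor, these intertwinings iterate through any element of $\fDb$, so
\[
B_f\bigl(\tfrac12 x\cdot\na_x,\tfrac14\De_x\bigr)Q_{n,i,j}(x)=Y^{n-2i}_j(x')\cdot B_f(\cD_1,\cD_{2,n-2i})\bigl[\qh^{\al,\be;a}_{i,n-2i}(z)\,z^{(n-2i)/2}\bigr]\Big|_{z=||x||^2}.
\]
Applying \prref{pr4.7} with $\be=d/2-1$ and $s=n-2i$ yields the eigenvalue $f(\la^{\al+d/2-1}_{i+(n-2i-k)/2})=f(\la^{\al+d/2-1}_{(n-k)/2})$, and repackaging $Y^{n-2i}_j(x')\qh^{\al,\be;a}_{i,n-2i}(\rho^2)\rho^{n-2i}$ as $Q_{n,i,j}(x)$ completes \eqref{5.10}.

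The main obstacle is the intertwining of $\tfrac14\De_x$ with $\cD_{2,s}$: one must recognize that the algebraic deformation $-s(s+2\be)/(4z)$ placed into $\cD_2$ to form $\cD_{2,s}$ in \eqref{4.22}---which looked somewhat ad hoc in the one-dimensional setting---is precisely the term needed to absorb the angular Laplacian $\rho^{-2}\De_{S^{d-1}}$ acting on $Y^s_j$. Once this calibration is in hand, the iteration through $\fDb$ (guaranteed by $B_f\in\fDb$ from \thref{th4.2}) and the invocation of \prref{pr4.7} are routine.
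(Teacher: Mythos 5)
Your proof of \eqref{5.10} is correct and is essentially the paper's own argument: pass to polar coordinates, use \eqref{5.2} and \eqref{5.4} to see that $\tfrac14\De_x$ acting on $p(||x||^2)Y^s_j(x)$ reduces to $\cD_{2,s}$ acting on $p(z)z^{s/2}$ with $z=\rho^2$ and $\be=d/2-1$, iterate through $\fDb$, and invoke \prref{pr4.7}. For the basis statement the paper argues slightly differently --- linear independence via the orthogonality \eqref{5.1} of the spherical harmonics together with the dimension count $\sum_{i}\sigma_{n-2i}=\binom{n+d-1}{d-1}$ --- while you use leading terms and the decomposition of the degree-$n$ homogeneous polynomials into $\bigoplus_i ||x||^{2i}\cH_{n-2i}$; both are fine and both rest on the same implicit nondegeneracy (nonvanishing of the shifted $\tau$-function) ensuring $\deg \qh^{\al,\be;a}_{i,s}=i$, which you at least flag explicitly.
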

\begin{proof}
The linear independence of $Q_{n,i,j}$ follows easily from \eqref{5.1} and the fact that the polynomials $\{\qh^{\al,\be;a}_{i,s}\}_{i\in\Nset_0}$ are linearly independent. Since 
\begin{align*}
\sum_{i=0}^{\lfloor n/2\rfloor}\sigma_{n-2i}=\binom{n+d-1}{d-1} =\text{the number of monomials of total degree }n,
\end{align*}
we see that the polynomials $Q_{n,i,j}$ form a basis for $\cPd$. It remains to show that they satisfy \eqref{5.10}. 
Using polar coordinates we find
\begin{equation*}
Q_{n,i,j}(x)=\qh^{\al,d/2-1;a}_{i,n-2i}(\rho^2)\rho^{n-2i}\,Y^{n-2i}_{j}(x').
\end{equation*}
From equations \eqref{5.2} and \eqref{5.4} we see that 
\begin{equation*}
\begin{split}
&\De_{x}Q_{n,i,j}(x)\\
&\quad=\left[\pd_{\rho}^2+\frac{d-1}{\rho}\pd_{\rho}-\frac{1}{\rho^2}(n-2i)(n-2i+d-2)\right]\qh^{\al,d/2-1;a}_{i,n-2i}(\rho^2)\rho^{n-2i}\,Y^{n-2i}_{j}(x') .
\end{split}
\end{equation*}
The proof of \eqref{5.10}  now follows from \prref{pr4.7} upon changing $z=\rho^2$ and using equations \eqref{5.6}.
\end{proof}

\section{An explicit example}\label{se6}

In this section we illustrate all steps in the paper with the simplest nontrivial case $\al=k=1$. 

\subsection{Krall polynomials} \label{ss6.1}
Let us first consider the one-dimensional case which leads to Krall polynomials \cite{Kr2}.  When $\al=k=1$ the polynomials $q_n(z)$ constructed in \seref{se3} are given by the following formula

\begin{equation}\label{6.1}
q^{1,\be;a_0}_n(z)=\left| \begin{matrix}\psi^{(0);1,\be;a_0}_{n} &p^{1,\be}_{n}(z)\\ \psi^{(0);1,\be;a_0}_{n-1} &p^{1,\be}_{n-1}(z)\end{matrix}\right|,
\end{equation}
where $p^{1,\be}_n(z)$ are the Jacobi polynomials in \eqref{2.1} and
\begin{equation}\label{6.2}
\psi^{(0);1,\be;a_0}_{n}=a_0+\frac{(n+1)(n+\be+1)}{\be+1}.
\end{equation}
For $n\neq m$ they satisfy the orthogonality relation
\begin{equation}\label{6.3}
\int_{0}^{1}q^{1,\be;a_0}_n(z)q^{1,\be;a_0}_m(z)z^{\be}dz+\frac{1}{a_0(\be+1)}q^{1,\be;a_0}_n(1)q^{1,\be;a_0}_m(1)=0.
\end{equation}
Since $k=1$ formula \eqref{4.4} shows that $\tau_n=\psi^{(0);1,\be;a_0}_{n}=a_0+\frac{(n+1)(n+\be+1)}{\be+1}$.
From this and \reref{re4.1} it follows that the commutative algebra $\cA^{1,\be;a_0}$ defined in \eqref{4.7} is generated by two polynomials of degrees $2$ and $3$. A short computation shows that 
\begin{equation}\label{6.4}
\cA^{1,\be;a_0}=\Rset[f_2,f_3], 
\end{equation}
where 
\begin{subequations}\label{6.5}
\begin{align}
f_2(t)&= t^2 + \frac{1}{2}(3 + 4a_0 + 4\be + 4a_0\be)t\label{6.5a}\\
f_3(t)&=t^3 + \frac{1}{4}(1 + 6a_0 + 6\be + 6a_0\be)t^2 - \frac{1}{16}(21 + 12a_0 + 28\be + 12a_0\be + 4\be^2)t.\label{6.5b}
\end{align}
\end{subequations}
The algebra $\cD^{1,\be;a_0}$ defined in \thref{th4.2} is generated by the operators $B_2:=B_{f_2}$ and $B_3:=B_{f_3}$ of orders 4 and 6 respectively. The operator $B_2$ goes back to the work of Krall \cite{Kr2}. Using the operators $\cD_1$ and $\cD_2$ given in \eqref{4.1} we can write $B_2$ as follows:
\begin{equation}\label{6.6}
\begin{split}
&B_2(\cD_1,\cD_2)=\cD_1^{4}-2\cD_2\cD_1^{2}+\cD_2^{2}+2(1+\be)\cD_1^{3}-2\be\cD_2\cD_1\\
&\qquad 
+(1 + 2a_0 + 3\be + 2a_0 \be + \be^2)\cD_1^2 - 
 2(1 + a_0 + a_0\be)\cD_2 \\
 &\qquad+ (1 + \be) (\be + 2 a_0 (1 + \be))\cD_1 - \frac{1}{16} (3 + 2 \be) (3 + 6 \be + 8 a_0 (1 + \be)).
\end{split}
\end{equation}
One can write a similar formula for $B_3$. If we denote 
$$a_0^{s}=\frac{4a_0+4a_0\be+2\be s+s^2}{4(1+\be+s)},$$
then from formula \eqref{6.2} it follows easily that 
\begin{equation*}
\psi^{(0);1,\be;a_0}_{n+s/2}=\frac{1+\be+s}{1+\be}\psi^{(0);1,\be+s;a_0^{s}}_{n},
\end{equation*}
i.e. the functions $\psi^{(0);1,\be;a_0}_{n+s/2}$ and $\psi^{(0);1,\be+s;a_0^{s}}_{n}$ differ by a factor independent of $n$. Therefore for the polynomials $\qh$ defined in \eqref{4.26} we find
\begin{equation*}
\qh^{1,\be;a_0}_{n,s}(z)=\frac{1+\be+s}{1+\be}q^{1,\be+s;a_0^{s}}_{n}(z).
\end{equation*}
This combined with \eqref{6.3} shows that for $n\neq m$ the polynomials $\qh^{1,\be;a_0}_{n,s}(z)$ satisfy the orthogonality relation
\begin{equation}\label{6.7}
\left(1+\frac{s^2+2\be s}{4a_0(\be+1)}\right)\int_{0}^{1}\qh^{1,\be;a_0}_{n,s}(z)\qh^{1,\be;a_0}_{m,s}(z)z^{\be+s}dz+\frac{1}{a_0(\be+1)}\qh^{1,\be;a_0}_{n,s}(1)\qh^{1,\be;a_0}_{m,s}(1)=0.
\end{equation}

\subsection{Krall polynomials in higher dimension} \label{ss6.2}
Let us consider now $x\in\Rset^d$ and set $\be=\frac{d}{2}-1$. Then the algebra $\cKd(\al,a_0)$ defined in \eqref{5.8} is 
generated by the operators $B_{2}\left(\frac{1}{2}x \cdot \na_x\, ,\,\frac{1}{4}\De_x\right)$ and $B_{3}\left(\frac{1}{2}x \cdot \na_x\, ,\,\frac{1}{4}\De_x\right)$ which act diagonally on the basis of polynomials $Q_{n,i,j}$ described in \thref{th5.1}. 
Let us denote by $u_0$ the constant
$$u_0=\frac{1}{a_0(\be+1)}=\frac{2}{a_0d}.$$
Then using equations \eqref{5.1}, \eqref{5.4}, \eqref{5.5} and \eqref{6.7} we see that the polynomials $Q_{n,i,j}$ are mutually orthogonal with respect to the inner product on $\cPd$ defined by
\begin{equation}\label{6.8}
\begin{split}
\langle f, g\rangle = &\int_{B^{d}}f(x)g(x)dx+\frac{u_0}{2} \int_{S^{d-1}}f(x')g(x')d\omega(x')\\
&\qquad -\frac{u_0}{4} \int_{B^{d}}\left(\De_{S^{d-1}} f(x)\right) g(x)dx.
\end{split}
\end{equation}
The interesting new phenomenon in the multivariate case is the fact that, even in the simplest example ($\al=k=1$), the polynomials are orthogonal with respect to an inner product involving the spherical Laplacian. In this respect, the multivariate analogs of Krall polynomials discussed here are related to the so called Sobolev orthogonal polynomials, see for instance \cite{X} and the references therein. However the appearance of $\De_{S^{d-1}}$ in the inner product defined in \eqref{6.8}, which comes naturally from our approach, seems to be new. 

It would be interesting to find explicit orthogonality relations for the general multivariate polynomials $Q_{n,i,j}$ defined in \thref{th5.1}. The key step would be to discover an orthogonality relation similar to \eqref{6.7} for the polynomials $\qh^{\al,\be;a}_{n,s}$ given in \prref{pr4.7}.


\begin{thebibliography}{xx}

\bibitem{D} G.~Darboux,  {\em Le\c cons sur la th\' eorie g\' en\' eral des surfaces}, Gauthier-Villars  (1912).

\bibitem{DG} J.~J.~Duistermaat and F.~A.~Gr\"unbaum, {\em Differential 
equations in the spectral parameter}, Comm. Math. Phys. 103 (1986), 
no. 2, 177--240.

\bibitem{DX}  C.~F.~Dunkl and Y.~Xu, {\em Orthogonal polynomials of 
several variables}, Encyclopedia of Mathematics and its Applications 81, 
Cambridge University Press, Cambridge (2001).

\bibitem{GHH} F.~A.~Gr\"unbaum, L.~Haine and E.~Horozov, {\em Some functions 
that generalize the Krall-Laguerre polynomials}, J. Comput. Appl. Math. 106 
(1999), no. 2, 271--297.

\bibitem{GY} F.~A.~Gr\"unbaum and M.~Yakimov, {\em Discrete
bispectral Darboux transformations from Jacobi operators},
Pacific J. Math. 204 (2002), 395--431 (arXiv:math/0012191).

\bibitem{H} L.~Haine, The Bochner-Krall problem: some new perspectives, In: {\em Special functions 2000: current perspective and future directions (Tempe, AZ)},  pp. 141--178, NATO Sci. Ser. II Math. Phys. Chem., 30, Kluwer Acad. Publ., Dordrecht, 2001.

\bibitem{HI} L.~Haine and P.~Iliev, {\em Commutative rings of
difference operators and an adelic flag manifold}, Internat. Math.
Res. Notices 2000 (2000), no. 6, 281--323.

\bibitem{I1} P.~Iliev, {\em Discrete versions of the Kadomtsev-Petviashvili 
hierarchy and the bispectral problem}, Ph.D. thesis, Universit\'e Catholique 
de Louvain, 1999.
\bibitem{I2} P.~Iliev, {\em $q$-KP hierarchy, bispectrality and
Calogero-Moser systems}, J. Geom. Phys. 35 (2000), no. 2-3, 157--182.

\bibitem{J} C.~G.~J.~Jacobi, {\em Untersuchungen \"uber die 
Differentialgleichung der hypergeometrischen Reihe}, 
J. Reine Angew. Math. 56 (1859), 149--165.

\bibitem{KK} J.~Koekoek and R.~Koekoek, {\em Differential equations for generalized Jacobi polynomials}, J. Comput. Appl. Math. 126 (2000), no. 1-2, 1--31.

\bibitem{Ko} T.~H.~Koornwinder, {\em Orthogonal polynomials with weight function $(1-x)^{\alpha }(1+x)^{\beta }+M\delta (x+1)+N\delta (x-1)$}, Canad. Math. Bull. 27 (1984), no. 2, 205--214. 

\bibitem{Kr1} H.~L.~Krall, \emph{Certain differential equations for
the Tchebycheff polynomials}, Duke Math. J. 4 (1938), 705--718.

\bibitem{Kr2} H.~L.~Krall, \emph{On orthogonal polynomials
satisfying a certain fourth order differential equation}, The Pennsylvania
Sate College Studies, No. 6, 1940.

\bibitem{L} L.~L.~Littlejohn, {\em The Krall polynomials: a new class of orthogonal polynomials},  Quaestiones Math.  5  (1982/83), no. 3, 255--265.

\bibitem{MS} V.~B.~Matveev and M.~A.~Salle, {\em Darboux transformations and 
solitons}, Springer Series in Nonlinear Dynamics. 
Springer-Verlag, Berlin, 1991.

\bibitem{R} M.~Reach, {\em Generating difference equations with the Darboux
transformation}, Comm. Math. Phys. 119 (1988), no. 3, 385--402.

\bibitem{X} Y.~Xu, {\em Sobolev orthogonal polynomials defined via gradient on the unit ball}, 
J. Approx. Theory 152 (2008), no. 1, 52--65. 

\bibitem{Z} A.~Zhedanov, {\em A method of constructing Krall's polynomials}, J. Comput. Appl. Math. 107 (1999), 1--20.

\end{thebibliography}
\end{document}